\pgfplotsset{compat=1.16}
\newtheorem{theorem}{Theorem}[section]
\newtheorem{remark}[theorem]{Remark}
\def\section{\@startsection {section}{1}{\z@}{3.25ex plus 1ex minus
		.2ex}{1.5ex plus .2ex}{\large\bf}}
\def\subsection{\@startsection{subsection}{2}{\z@}{3.25ex plus 1ex minus
		.2ex}{1.5ex plus .2ex}{\normalsize\bf}}
\title{A note about the invariance of the basic reproduction number for stochastically perturbed SIS models}
\author{Enrico Bernardi\thanks{Dipartimento di Scienze Statistiche Paolo Fortunati, Università di Bologna, Bologna, Italy. \textbf{e-mail}: enrico.bernardi@unibo.it} \and  Alberto Lanconelli\thanks{Dipartimento di Scienze Statistiche Paolo Fortunati, Università di Bologna, Bologna, Italy. \textbf{e-mail}: alberto.lanconelli2@unibo.it}}
\date{\today}
\begin{document}
	
\maketitle
	
\bigskip
	
\begin{abstract}
We try to justify rigorously, using a Wong-Zakai approximation argument, the susceptible-infected-susceptible (SIS) stochastic differential equation proposed in \cite{Mao 2011}. We discover that according to this approach the \emph{right} stochastic model to be considered should be the Stratonovich version of the It\^o equation analyzed in \cite{Mao 2011}. Surprisingly, this alternative model presents the following feature: the threshold value characterizing the two different asymptotic regimes of the solution coincides with the one describing the classical SIS deterministic equation.
\end{abstract}
	
Key words and phrases: SIS epidemic model, It\^o and Stratonovich stochastic differential equations, Wong-Zakai approximation, extinction, persistence. \\
	
AMS 2000 classification: 60H10, 60H30, 92D30.
	
\allowdisplaybreaks
	
\section{Introduction}\label{intro}

The susceptible-infected-susceptible (SIS) model is a simple
mathematical model that describes, under suitable assumptions, the
spread of diseases with no permanent immunity (see
e.g. \cite{Brauer},\cite{HY}). In such models an individual starts
being susceptible to a disease, at some point of time gets infected
and then recovers after some other time interval, becoming susceptible
again.
If $S(t)$ and $I(t)$ denote the number of susceptibles and infecteds at time $t$, respectively, then the differential equations describing the spread of the disease are
\begin{align}\label{SIS deterministic}
\begin{cases}
\frac{dS(t)}{dt}=\mu N-\beta S(t)I(t)+\gamma I(t)-\mu S(t),& S(0)=s_0>0;\\
\frac{dI(t)}{dt}=\beta S(t)I(t)-(\mu+\gamma) I(t),& I(0)=i_0>0.
\end{cases}
\end{align}
Here, $N:=s_0+i_0$ is the initial size of the population amongst whom the disease is spreading, $\mu$ denotes the per capita death rate, $\gamma$ is the rate at which infected individuals become cured and $\beta$ stands for the disease transmission coefficient. Note that 
\begin{align*}
\frac{d}{dt}(S(t)+I(t))=\mu(N-(S(t)+I(t))), \quad S(0)+I(0)=N,
\end{align*}
and hence
\begin{align*}
S(t)+I(t)=S(0)+I(0)=N,\quad\mbox{ for all $t\geq 0$}.
\end{align*}
Therefore, system (\ref{SIS deterministic}) reduces to the differential equation
\begin{align}\label{SIS one}
\frac{dI(t)}{dt}=\beta I(t)(N-I(t))-(\mu+\gamma) I(t),\quad I(0)=i_0\in ]0,N[,
\end{align}
with $S(t):=N-I(t)$, for $t\geq 0$. Equation (\ref{SIS one}) can be solved explicitly as
\begin{align}\label{explicit SIS}
I(t)=\frac{i_0e^{[\beta N-(\mu+\gamma)]t}}{1+\beta\int_0^ti_0e^{[\beta N-(\mu+\gamma)]s}ds},\quad t\geq 0,
\end{align}
and one finds that 
\begin{align*}
\lim_{t\to+\infty}I(t)=
\begin{cases}
0,&\mbox{ if $R_0\leq 1$};\\
N(1-1/R_0),&\mbox{ if $R_0>1$},
\end{cases}
\end{align*}
where 
\begin{align*}
R_0:=\frac{\beta N}{\mu+\gamma}.
\end{align*} 
This ratio is known as \emph{basic reproduction number} of the infection and determines whether the disease will become extinct, i.e. $I(t)$ will tend to zero as $t$ goes to infinity, or will be persistent, i.e. $I(t)$ will tend to a positive limit as $t$ increases.

\subsection{The stochastic model}

With the aim of examining the effect of environmental stochasticity, Gray et al. \cite{Mao 2011} have proposed a stochastic version of (\ref{SIS one}) which is obtained via a suitable perturbation of the parameter $\beta$. More precisely, they write equation (\ref{SIS one}) in the differential form
\begin{align}\label{SIS differential}
dI(t)=\beta I(t)(N-I(t))dt-(\mu+\gamma) I(t)dt,\quad I(0)=i_0\in ]0,N[,
\end{align}
and \emph{formally} replace the infinitesimal increment $\beta dt$ with $\beta dt+\sigma dB(t)$, where $\sigma$ is a new positive parameter and $\{B(t)\}_{t\geq 0}$ denotes a standard one dimensional Brownian motion. This perturbation transforms the deterministic differential equation (\ref{SIS one}) into the stochastic differential equation
\begin{align}\label{SDE Mao}
dI(t)=[\beta I(t)(N-I(t))-(\mu+\gamma) I(t)]dt+\sigma I(t)(N-I(t))dB(t),
\end{align}
which the authors interpret in the It\^o's sense. Equation (\ref{SDE Mao}) is then investigated and the authors prove the existence of a unique global strong solution living in the interval $]0,N[$ with probability one for all $t\geq 0$. Moreover, they identify a \emph{stochastic reproduction number} 
\begin{align*}
R_0^S:=R_0-\frac{\sigma^2N^2}{2(\mu+\gamma)},
\end{align*}
which characterizes the following asymptotic behaviour: 
\begin{itemize}
	\item if $R_0^S<1$ and $\sigma^2<\frac{\beta}{N}$ or $\sigma^2>\max\{\frac{\beta}{N},\frac{\beta^2}{2(\mu+\gamma)}\}$, then the \emph{disease will become extinct}, i.e.
	\begin{align*}
		\lim_{t\to+\infty}I(t)=0;
	\end{align*}
	\item if $R_0^S>1$, then \emph{the disease will be persistent}, i.e.
	\begin{align*}
	\liminf_{t\to+\infty}I(t)\leq \xi\leq \limsup_{t\to+\infty}I(t),
	\end{align*}
where $\xi:=\frac{1}{\sigma^2}\left(\sqrt{\beta^2-2\sigma^2(\mu+\gamma)}-(\beta-\sigma^2N)\right)$.
\end{itemize}
It is worth mentioning that Xu \cite{Xu} refined the above description as follows:
\begin{itemize}
	\item if $R_0^S<1$, then $I(t)$ tends to zero, as $t$ tends to infinity, almost surely;
	\item if $R_0^S\geq 1$, then $I(t)$ is recurrent on $]0,N[$.
\end{itemize}

\subsection{The stochastic model revised}

We already mentioned that the It\^o equation 
\begin{align*}
	dI(t)=[\beta I(t)(N-I(t))-(\mu+\gamma) I(t)]dt+\sigma I(t)(N-I(t))dB(t),
\end{align*}
proposed in \cite{Mao 2011} is derived from 
\begin{align}\label{SIS one 1}
	\frac{dI(t)}{dt}=\beta I(t)(N-I(t))-(\mu+\gamma) I(t)
\end{align}
via the formal substitution
\begin{align*}
\beta dt\mapsto \beta dt+\sigma dB(t)
\end{align*}
in 
\begin{align*}
	dI(t)=\beta I(t)(N-I(t))dt-(\mu+\gamma) I(t)dt.
\end{align*}
It is important to remark that the non differentiability of the Brownian paths prevents from the implementation of an otherwise rigorous transformation 
\begin{align}\label{beta perturbation 2}
	\beta \mapsto \beta +\sigma \frac{dB(t)}{dt}
\end{align}
for equation (\ref{SIS one 1}). We now start from this simple observation and try to make such procedure rigorous. \\
Fix $T>0$ and, for a partition $\pi$ of the interval $[0,T]$, let $\{B^{\pi}(t)\}_{t\in [0,T]}$ be the polygonal approximation of the Brownian motion $\{B(t)\}_{t\in [0,T]}$, relative to the partition $\pi$. This means that $\{B^{\pi}(t)\}_{t\in [0,T]}$ is a continuous piecewise linear random function converging to $\{B(t)\}_{t\in [0,T] }$ almost surely and uniformly on $[0,T]$, as the mesh of the partition tends to zero. Now, substituting $\{B(t)\}_{t\in [0,T]}$ with $\{B^{\pi}(t)\}_{t\in [0,T]}$ in (\ref{beta perturbation 2}) we get a well defined transformation
\begin{align*}
	\beta \mapsto \beta +\sigma \frac{dB^{\pi}(t)}{dt},
\end{align*}
which in connection with (\ref{SIS one 1}) leads to the random ordinary differential equation
\begin{align*}
\frac{dI^{\pi}(t)}{dt}=[\beta I^{\pi}(t)(N-I^{\pi}(t))-(\mu+\gamma) I^{\pi}(t)]+\sigma I^{\pi}(t)(N-I^{\pi}(t))\frac{dB^{\pi}(t)}{dt}.
\end{align*}
According to the celebrated Wong-Zakai theorem \cite{WZ}, the solution of the previous equation converges, as the mesh of $\pi$ tends to zero, to the solution $\{\mathtt{I}(t)\}_{t\in [0,T]}$ of the Stratonovich-type stochastic differential equation   
\begin{align}\label{Mao Stratonovich}
d\mathtt{I}(t)=[\beta \mathtt{I}(t)(N-\mathtt{I}(t))-(\mu+\gamma) \mathtt{I}(t)]dt+\sigma \mathtt{I}(t)(N-\mathtt{I}(t))\circ dB(t),
\end{align} 
which is equivalent to the It\^o-type equation
\begin{align}\label{Mao Stratonovich 2}
	d\mathtt{I}(t)=&\left[\beta \mathtt{I}(t)(N-\mathtt{I}(t))-(\mu+\gamma) \mathtt{I}(t)+\frac{\sigma^2}{2}\mathtt{I}(t)(N-\mathtt{I}(t))(N-2\mathtt{I}(t))\right]dt\nonumber\\
	&+\sigma \mathtt{I}(t)(N-\mathtt{I}(t))dB(t)
\end{align} 
(see e.g. \cite{KS} for the definition of Stratonovich integral and It\^o-Stratonovich correction term). Therefore, the model equation obtained via this procedure differs from the one proposed in \cite{Mao 2011} for the presence in the drift coefficient of the additional term
\begin{align*}
\frac{\sigma^2}{2}\mathtt{I}(t)(N-\mathtt{I}(t))(N-2\mathtt{I}(t)).	
\end{align*}
Surprisingly, the \emph{stochastic reproduction number} for the corrected model (\ref{Mao Stratonovich 2}) coincides with $R_0=\frac{\beta N}{\mu+\gamma}$. In other words, the stochastic perturbation of $\beta$ doesn't affect the basic reproduction number.

\begin{theorem}\label{main theorem}
Equation (\ref{Mao Stratonovich 2}) possesses a unique global strong solution $\{\mathtt{I}(t)\}_{t\geq 0}$ which lives in the interval $]0,N[$ for all $t\geq 0$ with probability one. Such solution can be explicitly represented as
\begin{align*}
\mathtt{I}(t)=\frac{i_0\mathcal{E}(t)}{1+\frac{i_0}{N}(\mathcal{E}(t)-1)+i_0\frac{\mu+\gamma}{N}\int_0^t\mathcal{E}(s)ds},\quad t\geq 0,
\end{align*} 
where
\begin{align*}
\mathcal{E}(t):=e^{(\beta N-(\mu+\gamma)) t+N \sigma B(t)}.
\end{align*}
Moreover,
\begin{itemize}
\item if $R_0<1$, then $\mathtt{I}(t)$ tends to zero, as $t$ tends to infinity, almost surely;
\item if $R_0\geq 1$, then $\mathtt{I}(t)$ is recurrent on $]0,N[$.
\end{itemize}	
\end{theorem}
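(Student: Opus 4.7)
The plan is to exploit the fact that the Stratonovich form (\ref{Mao Stratonovich}) reduces to a linear SDE by a Möbius-type change of unknown, read off the explicit formula and the extinction regime directly from the solution of that linear equation, and handle the more delicate recurrence regime via Feller's boundary classification.

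First I would set $Z(t):=(N-\mathtt{I}(t))/\mathtt{I}(t)$, equivalently $\mathtt{I}(t)=N/(1+Z(t))$. Because the Stratonovich differential obeys the ordinary chain rule, substituting into (\ref{Mao Stratonovich}) converts the quadratic SDE for $\mathtt{I}$ into the inhomogeneous linear Stratonovich equation
\begin{align*}
dZ(t)=\bigl[(\mu+\gamma)-(\beta N-(\mu+\gamma))Z(t)\bigr]dt-\sigma N Z(t)\circ dB(t),\qquad Z(0)=\tfrac{N-i_0}{i_0}.
\end{align*}
The homogeneous part is solved by $1/\mathcal{E}(t)$, and variation of parameters then yields
\begin{align*}
Z(t)=\frac{1}{\mathcal{E}(t)}\Bigl[\tfrac{N-i_0}{i_0}+(\mu+\gamma)\int_0^t\mathcal{E}(s)\,ds\Bigr].
\end{align*}
Inverting $\mathtt{I}=N/(1+Z)$ and multiplying numerator and denominator by $i_0\mathcal{E}(t)/N$ reproduces the closed form claimed in the theorem. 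Since $\mathcal{E}(t)>0$ and the bracket is strictly positive pathwise, $\mathtt{I}(t)\in\,]0,N[$ for every $t\geq 0$ almost surely, which gives both global existence and the stated confinement. Uniqueness then follows by a standard localization: the coefficients of (\ref{Mao Stratonovich 2}) are locally Lipschitz on every compact sub-interval of $]0,N[$, so pathwise uniqueness holds up to the exit from such sub-intervals, and the confinement just obtained rules out that exit.

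For the extinction regime $R_0<1$ the analysis is immediate from the closed form. Writing $a:=\beta N-(\mu+\gamma)<0$, the strong law $B(t)/t\to 0$ forces $\mathcal{E}(t)\to 0$ almost surely and $\int_0^{\infty}\mathcal{E}(s)\,ds<\infty$ almost surely via an exponentially decaying envelope, so the numerator of $Z(t)$ stays bounded while the denominator vanishes; hence $Z(t)\to\infty$ and $\mathtt{I}(t)=N/(1+Z(t))\to 0$ almost surely.

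The main obstacle is the recurrence claim when $R_0\geq 1$, because then both $\mathcal{E}(t)$ and $\int_0^t\mathcal{E}(s)\,ds$ diverge and no naive limit of the explicit formula is available. Here I would appeal to Feller's boundary classification for the one-dimensional Itô diffusion (\ref{Mao Stratonovich 2}). The ratio $2b_I(x)/\sigma(x)^2$ decomposes via partial fractions into the elementary integrands $1/[x(N-x)]$, $1/[x(N-x)^2]$ and $(N-2x)/[x(N-x)]$, whose primitives are standard; a short calculation then shows that the scale density $s'(x)$ behaves like $x^{-1-(2\beta/(\sigma^2N)-2(\mu+\gamma)/(\sigma^2N^2))}$ as $x\to 0^+$ and blows up super-exponentially, of order $\exp(\mathrm{const}/(N-x))$, as $x\to N^-$. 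The right endpoint is therefore inaccessible unconditionally; the left endpoint is inaccessible precisely when the exponent at $0$ is at most $-1$, that is $\beta N\geq \mu+\gamma$, i.e. $R_0\geq 1$. Feller's criterion then yields recurrence on $]0,N[$ in this regime, while the opposite inequality is consistent with the extinction result obtained in the previous step.
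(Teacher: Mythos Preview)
Your argument is correct, but it proceeds along a substantially different route from the paper. The paper establishes existence, uniqueness and confinement to $]0,N[$ via an abstract framework (truncation of the coefficients, a boundary-avoidance theorem and a comparison theorem), derives the explicit formula by passing to the limit in the Wong--Zakai approximants, and treats extinction through a general It\^o-formula/strong-law argument for $\ln X(t)$; you instead linearize by the M\"obius substitution $Z=(N-\mathtt I)/\mathtt I$, solve the resulting affine Stratonovich SDE in closed form, and read off existence, confinement and extinction directly from that formula. For recurrence both arguments invoke Feller's test, but the paper first applies the logit change of variable $\mathtt J=\ln(\mathtt I/(N-\mathtt I))$ (which is exactly $-\ln Z$) to obtain an SDE on $\mathbb R$ with constant diffusion $\sigma N$, so that the scale function is a single explicit integral; you instead compute the scale density on $]0,N[$ by partial fractions. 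Your approach is shorter and fully self-contained for this particular model; the paper's approach buys generality (the existence and extinction theorems cover a broad class of SDEs beyond (\ref{Mao Stratonovich 2})) and a tidier recurrence calculation after the logit transformation. One small remark: your uniqueness sentence is slightly roundabout; since the coefficients of (\ref{Mao Stratonovich 2}) are polynomials and hence locally Lipschitz on all of $\mathbb R$, pathwise uniqueness up to explosion is immediate, and your explicit non-exploding solution then settles global uniqueness without any reference to compact sub-intervals.
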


The paper is organized as follows: in Section 2 we develop a general framework to study existence, uniqueness and sufficient conditions for extinction and persistence for a large class of equations which encompasses the model equation (\ref{SDE Mao}) and its revised version (\ref{Mao Stratonovich 2}); Section 3 contains the proof of Theorem \ref{main theorem}. 

\section{A general approach}\label{general method}

Aim of the present section is to propose a general method for studying existence and uniqueness of global strong solutions, as well as conditions for their extinction or persistence, for a large class of equations, which includes (\ref{SDE Mao}) and (\ref{Mao Stratonovich 2}) as particular cases. Namely, we consider stochastic differential equations of the form
\begin{align}\label{SDE intro}
	\begin{cases}
		dX(t)=[f(X(t))-h(X(t))]dt+\sum_{i=1}^mg_i(X(t))dB_i(t),& t>0;\\
		X(0)=x_0\in ]0,N[,&
	\end{cases}
\end{align}
where the coefficients satisfy only those fairly general assumptions needed to derive the desired properties (see Theorem \ref{general existence theorem} below for the detailed assumptions). Our method allows for a great flexibility in the choice of the coefficients while preserving the essential features of (\ref{SDE Mao}) and (\ref{Mao Stratonovich 2}). In particular, we allow the diffusion coefficients to vanish on arbitrary intervals, thus ruling out the techniques based on Feller's test for explosions (see for instance Chapter 5 in \cite{KS}). Also the method based on the Lyapunov function, which is successfully applied in \cite{Mao 2011} doesn't seem to be appropriate for the great generality considered here. Our approach relies instead on two general theorems of the theory of stochastic differential equations, which we now restate for the readers' convenience at the beginning of the next section (see Theorem \ref{boundary} and Theorem \ref{comparison} below). 

Let $(\Omega,\mathcal{F},\mathbb{P})$ be a complete probability space endowed with an $m$-dimensional standard Brownian motion $\{(B_1(t),...,B_m(t))\}_{t\geq 0}$ and denote by $\{\mathcal{F}_t^B\}_{t\geq 0}$ its augmented natural filtration. In the sequel we will be working with one dimensional It\^o's type stochastic differential equations driven by the $m$-dimensional Brownian motion $\{(B_1(t),...,B_m(t))\}_{t\geq 0}$.

\begin{theorem}\label{boundary}
Let $\{X(t)\}_{t\geq 0}$ be the unique global strong solution of the stochastic differential equation
\begin{align*}
\begin{cases}
dX(t)=\mu(X(t))dt+\sum_{i=1}^m\sigma_i(X(t))dB_i(t),& t>0;\\
X(0)=x_0\in \mathbb{R},&
\end{cases}
\end{align*}
where the coefficients $\mu,\sigma_1,...,\sigma_m:\mathbb{R}\to\mathbb{R}$ are assumed to be globally Lipschitz continuous. If we set
\begin{align*}
\Lambda:=\{x\in\mathbb{R}:\mu(x)=\sigma_1(x)=\cdot\cdot\cdot=\sigma_m(x)=0\}
\end{align*}
and assume $x_0\notin\Lambda$, then
\begin{align*}
\mathbb{P}\left(X(t)\notin\Lambda,\mbox{ for all }t\geq 0\right)=1.
\end{align*}
\end{theorem}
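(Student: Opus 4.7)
The plan is a two-step reduction: first exploit the one-dimensional structure of $\Lambda^c$ to reduce the statement to non-attainability of a single point of $\Lambda$ in finite time, then prove the latter via an It\^o-style estimate built from the Lipschitz bounds. For the reduction, continuity of the coefficients makes $\Lambda$ closed, so the connected component $I_0=(a,b)$ of $\Lambda^c$ containing $x_0$ is an open interval whose finite endpoints lie in $\Lambda$. By path continuity and the intermediate value theorem, $X$ can escape $I_0$ only by hitting such a finite endpoint (reaching $\pm\infty$ in finite time is excluded by the global-existence hypothesis). So it suffices to show that for every finite $c\in\Lambda$ the hitting time $\tau_c:=\inf\{t\geq 0:X(t)=c\}$ equals $+\infty$ almost surely.

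Fix such a $c$, let $L$ be a common Lipschitz constant for $\mu,\sigma_1,\dots,\sigma_m$, and set $Y(t):=X(t)-c$, so $Y(0)\neq 0$. The relations $\mu(c)=\sigma_i(c)=0$ combined with the Lipschitz property give the crucial bounds $|\mu(x)|\leq L|x-c|$ and $|\sigma_i(x)|\leq L|x-c|$. For $\epsilon\in(0,|Y(0)|)$ set $\tau_\epsilon:=\inf\{t\geq 0:|Y(t)|\leq\epsilon\}$; then path continuity yields $\tau_\epsilon\uparrow\tau_c$ as $\epsilon\downarrow 0$. Applying It\^o's formula to $-\log y^2$ on the time interval $[0,t\wedge\tau_\epsilon]$, where $|Y|>\epsilon$ keeps us away from the singularity, produces
\begin{align*}
-\log Y(t\wedge\tau_\epsilon)^2+\log Y(0)^2=\int_0^{t\wedge\tau_\epsilon}\!\!\left(\frac{\sum_{i=1}^m\sigma_i(X(s))^2}{Y(s)^2}-\frac{2\mu(X(s))}{Y(s)}\right)ds-\int_0^{t\wedge\tau_\epsilon}\!\!\frac{2\sum_{i=1}^m\sigma_i(X(s))}{Y(s)}\,dB_i(s).
\end{align*}
The Lipschitz bounds force the finite-variation integrand into $[-2L,\,2L+mL^2]$ and make the stochastic integrand uniformly bounded on $[0,\tau_\epsilon]$, so the stochastic integral is a genuine mean-zero martingale; taking expectations yields $\mathbb{E}\bigl[-\log Y(t\wedge\tau_\epsilon)^2\bigr]\leq -\log Y(0)^2+(2L+mL^2)\,t$.

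To convert this into a probability estimate, I split the expectation over $\{\tau_\epsilon\leq t\}$, on which continuity forces $|Y(\tau_\epsilon)|=\epsilon$ and the contribution is $-2(\log\epsilon)\,\mathbb{P}(\tau_\epsilon\leq t)$, and its complement, on which the concavity inequality $-\log y^2\geq 1-y^2$ together with the standard Lipschitz moment bound $\mathbb{E}[X(t)^2]<\infty$ (via Gronwall) lower-bounds the contribution by $-\mathbb{E}[Y(t)^2]$. Rearranging produces $\mathbb{P}(\tau_\epsilon\leq t)\leq C_t/(-\log\epsilon)$ with $C_t$ independent of $\epsilon$; sending $\epsilon\downarrow 0$ (using $\tau_\epsilon\uparrow\tau_c$) and then $t\to+\infty$ gives $\mathbb{P}(\tau_c<\infty)=0$, which combined with the first-paragraph reduction completes the proof. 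The main technical subtlety is this lower-bound step: on $\{\tau_\epsilon>t\}$ the quantity $-\log Y(t)^2$ can be arbitrarily negative when $|Y(t)|$ is large, so it cannot simply be dropped, and the inequality $-\log y^2\geq 1-y^2$ is precisely what trades this unbounded-below term against the available second-moment control. A secondary, structural point is that the reduction to a single-point problem is genuinely one-dimensional --- the It\^o estimate itself is dimension-free, but the fact that the boundary of a component of $\Lambda^c$ consists of at most two points has no direct higher-dimensional analogue.
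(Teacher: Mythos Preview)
Your argument is correct. The reduction to hitting a single boundary point $c\in\Lambda$ is sound (closedness of $\Lambda$, connected components of open subsets of $\mathbb{R}$ being intervals, path continuity forcing exit through an endpoint), and the It\^o computation on $-\log Y(t)^2$ together with the Lipschitz bounds $|\mu(x)|,|\sigma_i(x)|\leq L|x-c|$ gives exactly the expectation inequality you state. The lower-bound step via $-\log y^2\geq 1-y^2$ and the standard second-moment control $\mathbb{E}[Y(t)^2]<\infty$ is the right way to handle the unbounded-below term on $\{\tau_\epsilon>t\}$; the resulting estimate $\mathbb{P}(\tau_\epsilon\leq t)\leq C_t/(-2\log\epsilon)$ and the monotone passage $\tau_\epsilon\uparrow\tau_c$ finish it cleanly.

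As for comparison with the paper: there is essentially nothing to compare, since the paper does not prove this theorem at all but simply cites the short note \cite{L} (Lyons, \emph{Electron.\ Commun.\ Probab.}\ 2019), which establishes the result in arbitrary dimension. Your proof is therefore a genuinely different, self-contained route: it is more elementary and tailored to the one-dimensional setting, exploiting the interval structure of $\Lambda^c$ to reduce to a single-point hitting problem. What you gain is a transparent quantitative estimate (the $1/|\log\epsilon|$ rate) with no external dependencies; what you lose, as you yourself note in the final sentence, is any direct extension to higher dimensions, where the boundary of a component of $\Lambda^c$ is no longer a finite set and the reduction step breaks down. For the purposes of this paper, which only ever uses the theorem in dimension one, your argument is entirely adequate and arguably preferable to an unexplained citation.
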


\begin{proof}
See the theorem in \cite{L}.
\end{proof}

\begin{theorem}\label{comparison}
Let $\{X(t)\}_{t\geq 0}$ be the unique global strong solution of the stochastic differential equation
\begin{align*}
\begin{cases}
dX(t)=\mu_1(X(t))dt+\sum_{i=1}^m\sigma_i(X(t))dB_i(t),& t>0;\\
X(0)=z\in \mathbb{R},&
\end{cases}
\end{align*}
and $\{Y(t)\}_{t\geq 0}$ be the unique global strong solution of the stochastic differential equation
\begin{align*}
\begin{cases}
dY(t)=\mu_2(Y(t))dt+\sum_{i=1}^m\sigma_i(Y(t))dB_i(t),& t>0;\\
Y(0)=z\in \mathbb{R},&
\end{cases}
\end{align*}
where the coefficients $\mu_1,\mu_2,\sigma_1,...,\sigma_m:\mathbb{R}\to\mathbb{R}$ are assumed to be globally Lipschitz continuous. If $\mu_1(z)\leq\mu_2(z)$, for all $z\in\mathbb{R}$, then
\begin{align*}
\mathbb{P}\left(X(t)\leq Y(t),\mbox{ for all }t\geq 0\right)=1.
\end{align*}
\end{theorem}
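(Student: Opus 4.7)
The plan is to establish this classical SDE comparison theorem via the Yamada--Watanabe smoothing trick applied to the positive part of the difference process. Set $Z(t) := X(t) - Y(t)$; by subtracting the two defining equations, $Z(0) = 0$ and
\[
Z(t) = \int_0^t [\mu_1(X(s)) - \mu_2(Y(s))]\,ds + \sum_{i=1}^m \int_0^t [\sigma_i(X(s)) - \sigma_i(Y(s))]\,dB_i(s).
\]
Crucially, the stochastic part involves the \emph{same} Brownian motions and the \emph{same} coefficients $\sigma_i$ applied to the two solutions. The goal is to show $\mathbb{E}[Z^+(t)] = 0$ for every $t\geq 0$, and then to upgrade this to a pathwise statement by continuity of sample paths.

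To handle the non-smoothness of $x \mapsto x^+$, I would introduce the standard Yamada--Watanabe approximation: pick $a_n \downarrow 0$ with $\int_{a_n}^{a_{n-1}} z^{-1}\,dz = n$ and continuous $\rho_n \geq 0$ supported in $[a_n, a_{n-1}]$ with $\int \rho_n = 1$ and $\rho_n(z) \leq 2/(nz)$. Define $\phi_n(x) := \int_0^{x^+} \int_0^y \rho_n(z)\,dz\,dy$. Then $\phi_n \in C^2(\mathbb{R})$, $\phi_n$ vanishes on $(-\infty,0]$, $\phi_n(x) \nearrow x^+$ pointwise, $0 \leq \phi_n'(x) \leq \mathbf{1}_{\{x>0\}}$, and most importantly $x^2 \phi_n''(x) = x^2 \rho_n(x) \leq 2 a_{n-1}/n \to 0$ as $n\to\infty$.

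Applying It\^o's formula to $\phi_n(Z(t))$ and taking expectations (after a routine localization argument, permissible because globally Lipschitz coefficients give solutions with finite moments, so the stochastic integral localizes to a true martingale),
\[
\mathbb{E}[\phi_n(Z(t))] = \mathbb{E} \int_0^t \phi_n'(Z(s))[\mu_1(X(s)) - \mu_2(Y(s))]\,ds + \frac{1}{2} \mathbb{E} \int_0^t \phi_n''(Z(s)) \sum_{i=1}^m [\sigma_i(X(s)) - \sigma_i(Y(s))]^2\,ds.
\]
On $\{Z(s) > 0\}$, the hypothesis $\mu_1 \leq \mu_2$ together with Lipschitz continuity of $\mu_1$ yield $\mu_1(X(s)) - \mu_2(Y(s)) \leq \mu_1(X(s)) - \mu_1(Y(s)) \leq L\, Z(s)$, while $\phi_n'$ vanishes elsewhere; combined with $\phi_n' \leq 1$, the drift term is at most $L \int_0^t \mathbb{E}[Z^+(s)]\,ds$. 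The It\^o correction is controlled by the Lipschitz bound $[\sigma_i(X) - \sigma_i(Y)]^2 \leq L^2 Z^2$, so it is dominated by $\tfrac{mL^2 t}{2}\cdot (2 a_{n-1}/n)$, which vanishes as $n \to \infty$.

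Passing to the limit via monotone convergence on the left yields $\mathbb{E}[Z^+(t)] \leq L \int_0^t \mathbb{E}[Z^+(s)]\,ds$, and Gronwall's inequality forces $\mathbb{E}[Z^+(t)] = 0$ for every $t \geq 0$. Continuity of sample paths upgrades this to $\mathbb{P}(Z(t) \leq 0 \text{ for all } t \geq 0) = 1$, as required. The main obstacle is the It\^o correction term, which has no analogue in the deterministic comparison principle and could a priori destroy the drift monotonicity argument; the Yamada--Watanabe choice of $\phi_n$ is engineered precisely to annihilate this term in the limit, exploiting the fact that the two solutions share the same diffusion coefficients so $[\sigma_i(X)-\sigma_i(Y)]^2$ already vanishes quadratically in $Z$.
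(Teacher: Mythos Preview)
Your proof is correct. The paper itself does not supply an argument here but simply refers the reader to Proposition~2.18, Chapter~5 of Karatzas--Shreve (for $m=1$, with the remark that the multi-Brownian extension is immediate) and to Theorem~1.1, Chapter~VI of Ikeda--Watanabe; the Yamada--Watanabe smoothing argument you have written out is precisely the proof given in those references, so you have effectively reproduced what the paper deferred to the literature.
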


\begin{proof}
	See Proposition 2.18, Chapter 5 in \cite{KS}, where the proof is given for $m=1$. The extension to several Brownian motions is immediate. See also Theorem 1.1, Chapter VI in \cite{Ikeda Watanabe}.   
\end{proof}

\subsection{Existence, uniqueness and support}

We are now ready to state our existence and uniqueness result.

\begin{theorem}\label{general existence theorem}
	For $i\in\{1,...,m\}$, let $f,g_i,h:\mathbb{R}\to\mathbb{R}$ be locally Lipschitz-continuous functions such that 
	\begin{enumerate}
		\item $f(0)=g_i(0)=0$ and $f(N)=g_i(N)=0$, for some $N>0$;
		\item $h(0)=0$ and $h(x)>0$, when $x>0$.
	\end{enumerate}
	Then, the stochastic differential equation
	\begin{align}
	\begin{cases}\label{SDE many BM}
	dX(t)=[f(X(t))-h(X(t))]dt+\sum_{i=1}^mg_i(X(t))dB_i(t),& t>0;\\
	X(0)=x_0\in ]0,N[,&
	\end{cases}
	\end{align}
	admits a unique global strong solution, which satisfies $\mathbb{P}(0<X(t)<N)=1$, for all $t\geq 0$.
\end{theorem}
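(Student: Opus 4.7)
The plan is to reduce to a globally Lipschitz framework and then trap the solution inside $(0,N)$ by combining Theorems \ref{boundary} and \ref{comparison}. Let $\pi(x) := (x \vee 0) \wedge N$ denote the projection onto $[0,N]$ and set $\bar f := f \circ \pi$, $\bar g_i := g_i \circ \pi$, $\bar h := h \circ \pi$. Since $\pi$ is $1$-Lipschitz and $f, g_i, h$ are Lipschitz on the compact interval $[0,N]$, these extensions are globally Lipschitz on $\mathbb{R}$; moreover they agree with the originals on $[0,N]$, vanish at both $0$ and $N$, and $\bar h \geq 0$ everywhere (using assumption $2$). Standard theory then yields a unique global strong solution $\bar X$ to the modified SDE driven by $\bar f, \bar g_i, \bar h$ starting from $x_0$.

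I would first establish $\bar X(t) > 0$ for every $t$ almost surely. Since $\bar f(0) - \bar h(0) = 0$ and $\bar g_i(0) = 0$ for each $i$, the point $0$ belongs to the set $\Lambda$ of Theorem \ref{boundary} associated with the modified equation, and $x_0 > 0 \notin \Lambda$, so that theorem gives $\bar X(t) \neq 0$ for all $t$, which by path continuity and $\bar X(0) > 0$ upgrades to $\bar X(t) > 0$. For the upper bound I would introduce the auxiliary SDE
\begin{align*}
d\bar Y(t) = \bar f(\bar Y(t))\, dt + \sum_{i=1}^m \bar g_i(\bar Y(t))\, dB_i(t), \quad \bar Y(0) = x_0,
\end{align*}
driven by the same Brownian motions; its coefficients being globally Lipschitz, it admits a unique global strong solution $\bar Y$. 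The pointwise inequality $\bar f - \bar h \leq \bar f$ (from $\bar h \geq 0$) together with the equality of the two diffusion vectors puts us in the hypotheses of Theorem \ref{comparison}, yielding $\bar X(t) \leq \bar Y(t)$ almost surely. A second application of Theorem \ref{boundary}, now to $\bar Y$, shows $\bar Y(t) \neq N$ since $\bar f(N) = \bar g_i(N) = 0$ places $N$ in the corresponding $\Lambda$-set; path continuity and $\bar Y(0) < N$ then give $\bar Y(t) < N$. Combining, $0 < \bar X(t) < N$ almost surely for every $t \geq 0$.

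Since $\bar X$ takes values in $(0,N)$ where the modified and original coefficients coincide, $\bar X$ is itself a global strong solution of (\ref{SDE many BM}). Uniqueness is then immediate: any strong solution $X$ of (\ref{SDE many BM}) also solves the globally Lipschitz modified SDE up to its first exit time from $(0,N)$ and therefore equals $\bar X$ on that interval by pathwise uniqueness for Lipschitz coefficients; since $\bar X$ never exits $(0,N)$, the exit time is almost surely infinite and $X = \bar X$ for all $t$. The only non-routine step is the two-stage use of Theorem \ref{boundary}: applied directly to (\ref{SDE many BM}) it pins the solution away from $0$ (the common root of all coefficients) but \emph{not} from $N$, because at $N$ the drift reduces to $-h(N) < 0$, so $N$ is outside the original $\Lambda$-set; detouring through the drift-reduced process $\bar Y$ and invoking Theorem \ref{comparison} is precisely what converts $N$ into a trap on the comparator level and thereby yields the desired upper bound for $\bar X$.
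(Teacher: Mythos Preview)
Your argument is correct and follows essentially the same route as the paper's proof: truncate the coefficients to obtain a globally Lipschitz SDE (your projection $\pi$ produces exactly the paper's $\bar f,\bar g_i,\hat h$), use Theorem~\ref{boundary} at $0$ for the modified process, compare with the drift-reduced auxiliary process via Theorem~\ref{comparison}, apply Theorem~\ref{boundary} again at $N$ for the auxiliary process, and conclude. The only cosmetic slip is the phrase ``$x_0>0\notin\Lambda$'': $\Lambda$ may contain points of $(0,N)$, but if $x_0\in\Lambda$ the solution is the constant $x_0$ and the conclusion is trivial, so this is harmless (the paper glosses over the same point).
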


\begin{remark}
	It is immediate to verify that equations (\ref{SDE Mao}) and (\ref{Mao Stratonovich 2}) fulfill the assumptions of Theorem \ref{general existence theorem}. 	
\end{remark}

\begin{proof}
The local Lipschitz-continuity of the coefficients entails pathwise uniqueness for  equation (\ref{SDE many BM}), see for instance Theorem 2.5, Chapter 5 in \cite{KS}. Now, we consider the modified equation
\begin{align}
\begin{cases}\label{SDE 3}
d\mathcal{X}(t)=[\bar{f}(\mathcal{X}(t))-\hat{h}(\mathcal{X}(t))]dt+\sum_{i=1}^m\bar{g}_i(\mathcal{X}(t))dB_i(t),& t>0;\\
\mathcal{X}(0)=x_0\in ]0,N[,&
\end{cases}
\end{align}
where 
\begin{align*}
\bar{f}(x)=
\begin{cases}
f(x),&\mbox{if $x\in [0,N]$};\\
0,&\mbox{if $x\notin [0,N]$},
\end{cases}
\quad \mbox{ and }\quad
\bar{g}_i(x)=
\begin{cases}
g_i(x),&\mbox{if $x\in [0,N]$};\\
0,&\mbox{if $x\notin [0,N]$},
\end{cases}
\end{align*}
while
\begin{align*}
\hat{h}(x)=
\begin{cases}
0,&\mbox{if $x<0$};\\
h(x),&\mbox{if $x\in [0,N]$};\\
h(N),&\mbox{if $x>N$}.
\end{cases}
\end{align*}
The coefficients of equation (\ref{SDE 3}) are bounded and globally Lipschitz-continuous; this implies the existence of a unique global strong solution $\{\mathcal{X}(t)\}_{t\geq 0}$ for (\ref{SDE 3}). Moreover, the drift and diffusion coefficients vanish at $x=0$. Therefore, according to Theorem \ref{boundary}, the solution never visits the origin, unless it starts from there. Since $\mathcal{X}(0)=x_0\in ]0,N[$, we deduce that $\mathcal{X}(t)> 0$, for all $t\geq 0$, almost surely. Recalling the assumption $h(x)>0$ for $x>0$, we can rewrite equation (\ref{SDE 3}) as
\begin{align}
\begin{cases}\label{SDE 4}
d\mathcal{X}(t)=[\bar{f}(\mathcal{X}(t))-\hat{h}(\mathcal{X}(t))^+]dt+\sum_{i=1}^m\bar{g}_i(\mathcal{X}(t))dB_i(t),& t>0;\\
\mathcal{X}(0)=x_0\in ]0,N[,&
\end{cases}
\end{align}
where $x^+:=\max\{x,0\}$. We now compare the solution of the previous equation with the one of  
\begin{align}
\begin{cases}\label{SDE 5}
d\mathcal{Y}(t)=\bar{f}(\mathcal{Y}(t))dt+\sum_{i=1}^m\bar{g}_i(\mathcal{Y}(t))dB_i(t),& t>0;\\
\mathcal{Y}(0)=x_0\in ]0,N[,&
\end{cases}
\end{align}
which also possesses a unique global strong solution $\{\mathcal{Y}(t)\}_{t\geq 0}$. Systems (\ref{SDE 4}) and (\ref{SDE 5}) have the same initial condition and diffusion coefficients; moreover, the drift in (\ref{SDE 5}) is greater than the drift in (\ref{SDE 4}). By Theorem \ref{comparison} we conclude that 
\begin{align*}
\mathcal{X}(t)\leq \mathcal{Y}(t),\quad\mbox{ for all $t\geq 0$},
\end{align*} 
almost surely. Moreover, both the drift and diffusion coefficients in (\ref{SDE 5}) vanish at $x=N$. Therefore, invoking once more Theorem \ref{boundary}, the solution never visits $N$, unless it starts from there. Since $\mathcal{Y}(0)=x_0\in ]0,N[$, we deduce that $\mathcal{Y}(t)< N$, for all $t\geq 0$, almost surely. Combining all these facts, we conclude that 
\begin{align*}
0<\mathcal{X}(t)< N,\quad\mbox{ for all $t\geq 0$},
\end{align*}
almost surely. This in turn implies
\begin{align*}
\bar{f}(\mathcal{X}(t))=f(\mathcal{X}(t)),\quad \bar{g}_i(\mathcal{X}(t))=g_i(\mathcal{X}(t)),\quad
\hat{h}(\mathcal{X}(t))=h(\mathcal{X}(t)),
\end{align*}
and that $\{\mathcal{X}(t)\}_{t\geq 0}$ solves equation
\begin{align*}
\begin{cases}
d\mathcal{X}(t)=[f(\mathcal{X}(t))-h(\mathcal{X}(t))]dt+\sum_{i=1}^mg_i(\mathcal{X}(t))dB(t),& t>0;\\
\mathcal{X}(0)=x_0\in ]0,N[,&
\end{cases}
\end{align*}
which coincides with (\ref{SDE many BM}). The uniqueness of the solution completes the proof.
\end{proof}

\subsection{Extinction}

We now investigate the asymptotic behaviour of the solution of (\ref{SDE many BM}); here we are interested in sufficient conditions for extinction.

\begin{theorem}\label{theorem extintion}
Under the same assumptions of Theorem \ref{general existence theorem} assuming in addition,
\begin{align}\label{sup}
\sup_{x\in ]0,N[}\left\{\frac{f(x)-h(x)}{x}-\frac{1}{2}\sum_{i=1}^m\frac{g^2_i(x)}{x^2}\right\}<0,
\end{align}
the solution $\{X(t)\}_{t\geq 0}$ of equation (\ref{SDE many BM}) tends to zero exponentially, as $t$ tends to infinity, almost surely. More precisely,
\begin{align*}
\limsup_{t\to+\infty}\frac{\ln(X(t))}{t}\leq\sup_{x\in ]0,N[}\left\{\frac{f(x)-h(x)}{x}-\frac{1}{2}\sum_{i=1}^m\frac{g^2_i(x)}{x^2}\right\}<0,
\quad\mbox{ almost surely},
\end{align*}
\end{theorem}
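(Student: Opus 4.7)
The plan is to apply It\^o's formula to $\ln X(t)$ and then invoke a strong law of large numbers for continuous local martingales. Since Theorem \ref{general existence theorem} guarantees $0<X(t)<N$ almost surely for all $t\geq 0$, the process $\ln X(t)$ is well defined. Applying It\^o's formula to (\ref{SDE many BM}) yields
\begin{align*}
\ln X(t)=\ln x_0+\int_0^t\Phi(X(s))\,ds+M(t),
\end{align*}
where $\Phi(x):=\frac{f(x)-h(x)}{x}-\tfrac{1}{2}\sum_{i=1}^m\frac{g_i^2(x)}{x^2}$ is exactly the quantity appearing in (\ref{sup}) and $M(t):=\sum_{i=1}^m\int_0^t\frac{g_i(X(s))}{X(s)}\,dB_i(s)$ is a continuous local martingale. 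Setting $C:=\sup_{x\in(0,N)}\Phi(x)<0$, the drift integral is bounded above by $Ct$, so
\begin{align*}
\frac{\ln X(t)}{t}\leq\frac{\ln x_0}{t}+C+\frac{M(t)}{t}.
\end{align*}

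The central step will be to show that $M(t)/t\to 0$ almost surely. For this I first intend to verify that $\sum_{i=1}^m g_i^2(x)/x^2$ is bounded on $(0,N)$. The local Lipschitz continuity of each $g_i$ together with the vanishing conditions $g_i(0)=g_i(N)=0$ yields $|g_i(x)|\leq L_0\, x$ on some right-neighborhood of $0$ and $|g_i(x)|\leq L_N(N-x)$ on some left-neighborhood of $N$, while on any compact subset of $(0,N)$ the ratio $g_i^2/x^2$ is continuous and hence bounded. Combining these three estimates produces a constant $K$ with $\sum_{i=1}^m g_i^2(x)/x^2\leq K$ for all $x\in(0,N)$, whence $\langle M\rangle_t\leq Kt$ for every $t\geq 0$. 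By the Dambis--Dubins--Schwarz representation we may then write $M(t)=W(\langle M\rangle_t)$ for some Brownian motion $W$, and the law of the iterated logarithm (equivalently, the standard strong law for continuous local martingales with linearly bounded bracket) forces $M(t)/t\to 0$ almost surely.

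Letting $t\to\infty$ in the displayed inequality will then give
\begin{align*}
\limsup_{t\to\infty}\frac{\ln X(t)}{t}\leq C=\sup_{x\in(0,N)}\Phi(x)<0,
\end{align*}
which is precisely the exponential extinction statement of the theorem. The principal obstacle is the uniform control on $g_i^2(x)/x^2$ as $x\to 0^+$, which is exactly the regime relevant for extinction; here the boundary hypothesis $g_i(0)=0$ combined with local Lipschitz regularity is indispensable, and the analogous vanishing at $N$ plays the symmetric role near the upper boundary. Everything else is bookkeeping once this bound is in place.
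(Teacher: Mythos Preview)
Your proposal is correct and follows essentially the same route as the paper: apply It\^o's formula to $\ln X(t)$, bound the drift integrand by its supremum, and show that the martingale term divided by $t$ vanishes almost surely using the boundedness of $g_i(x)/x$ on $]0,N[$. The only cosmetic differences are that the paper obtains the bound $|g_i(x)/x|\leq L_N$ on all of $[0,N]$ in one stroke (a locally Lipschitz function is Lipschitz on the compact $[0,N]$, and $g_i(0)=0$), rather than your three-region argument, and that the paper invokes the strong law of large numbers for martingales directly instead of passing through Dambis--Dubins--Schwarz; both choices lead to the same conclusion.
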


\begin{proof}	
We follow the proof of Theorem 4.1 in \cite{Mao 2011}. First of all, we observe that the local Lipschitz-continuity of $f$ implies the existence of a constant $L_N$ such that
\begin{align*}
|f(x)-f(0)|\leq L_N|x-0|,\quad \mbox{ for all $x\in [0,N]$}.
\end{align*}
In particular, using the equality $f(0)=0$, we can rewrite the previous condition as
\begin{align*}
\left|\frac{f(x)}{x}\right|\leq L_N,\quad \mbox{ for all $x\in [0,N]$}.
\end{align*}
Since the same reasoning applies also to $h$ and $g_i$, for $i\in\{1,...,m\}$, we deduce that the supremum in (\ref{sup}) is always finite. \\
Now, let $\{X(t)\}_{t\geq 0}$ be the unique global strong solution of equation (\ref{SDE many BM}). An application of the It\^o formula gives
\begin{align}\label{ito}
\ln(X(t))=&\ln(x_0)+\int_0^t\left[\frac{f(X(s))-h(X(s))}{X(s)}-\frac{1}{2}\sum_{i=1}^m\frac{g^2_i(X(s))}{X(s)^2}\right]ds\\
&+\sum_{i=1}^m\int_0^t\frac{g_i(X(s))}{X(s)}dB_i(s)\nonumber.
\end{align}
Note that the boundedness of the function $x\mapsto \frac{g_i(x)}{x}$ on $]0,N[$ mentioned above entails that the stochastic process
\begin{align*}
t\mapsto \sum_{i=1}^m\int_0^t\frac{g_i(X(s))}{X(s)}dB_i(s),\quad t\geq 0,
\end{align*}
is an $(\{\mathcal{F}_t^B\}_{t\geq 0},\mathbb{P})$-martingale. Therefore, from the strong law of large numbers for martingales (see e.g. Theorem 3.4, Chapter 1 in \cite{Mao book}) we conclude that
\begin{align*}
\lim_{t\to +\infty}\sum_{i=1}^m\frac{1}{t}\int_0^t\frac{g_i(X(s))}{X(s)}dB_i(s)=0,
\end{align*}
almost surely. This fact, combined with (\ref{ito}) gives
\begin{align*}
\limsup_{t\to+\infty}\frac{\ln(X(t))}{t}\leq&\limsup_{t\to+\infty}\frac{1}{t}\int_0^t\left[\frac{f(X(s))-h(X(s))}{X(s)}-\frac{1}{2}\sum_{i=1}^m\frac{g^2_i(X(s))}{X(s)^2}\right]ds\\
\leq&\limsup_{t\to+\infty}\sup_{x\in ]0,N[}\left\{\frac{f(x)-h(x)}{x}-\frac{1}{2}\sum_{i=1}^m\frac{g^2_i(x)}{x^2}\right\}<0,
\end{align*}
almost surely. The proof is complete.
\end{proof}

\subsection{Persistence}

We now search for conditions ensuring the persistence for the solution $\{X(t)\}_{t\geq 0}$ of (\ref{SDE many BM}).

\begin{theorem}\label{persistence}
	Under the same assumptions of Theorem \ref{general existence theorem}, if inequality 
	\begin{align}\label{sup 2}
	\sup_{x\in ]0,N[}\left\{\frac{f(x)-h(x)}{x}-\frac{1}{2}\sum_{i=1}^m\frac{g^2_i(x)}{x^2}\right\}>0,
	\end{align}
	holds and moreover the function
	\begin{align}\label{function}
	x\mapsto\frac{f(x)-h(x)}{x}-\frac{1}{2}\sum_{i=1}^m\frac{g^2_i(x)}{x^2}
	\end{align}
	is strictly decreasing on the interval $]0,N[$, then the solution $\{X(t)\}_{t\geq 0}$ of the stochastic differential equation (\ref{SDE many BM}) verifies
	\begin{align}\label{thesis}
	\limsup_{t\to+\infty}X(t)\geq\xi\quad\mbox{ and }\quad\liminf_{t\to+\infty}X(t)\leq\xi,
	\end{align}
	almost surely. Here, $\xi$ is the unique zero of the function (\ref{function}) in the interval $[0,N]$. 
\end{theorem}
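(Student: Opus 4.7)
The plan is to mimic the logarithmic approach of Theorem \ref{theorem extintion} and establish both inequalities in (\ref{thesis}) by contradiction, exploiting the strict monotonicity of
\[
\phi(x):=\frac{f(x)-h(x)}{x}-\frac{1}{2}\sum_{i=1}^m\frac{g_i^2(x)}{x^2}.
\]
As a preliminary I would note that the local Lipschitz continuity together with $f(0)=h(0)=g_i(0)=0$ makes $f(x)/x$, $h(x)/x$ and $g_i(x)/x$ bounded on $]0,N]$, so $\phi$ is continuous there; the identities $f(N)=g_i(N)=0$ combined with $h(N)>0$ give $\phi(N)=-h(N)/N<0$, and together with hypothesis (\ref{sup 2}) and strict monotonicity this yields a unique zero $\xi\in ]0,N[$, with $\phi>0$ on $]0,\xi[$ and $\phi<0$ on $]\xi,N[$.

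Since Theorem \ref{general existence theorem} guarantees $X(t)\in ]0,N[$ almost surely, applying It\^o's formula to $\ln(X(t))$ exactly as in (\ref{ito}) yields
\[
\frac{\ln(X(t))}{t}=\frac{\ln(x_0)}{t}+\frac{1}{t}\int_0^t\phi(X(s))\,ds+\frac{M(t)}{t},
\]
where $M(t):=\sum_{i=1}^m\int_0^t\frac{g_i(X(s))}{X(s)}\,dB_i(s)$. The uniform bound on $g_i(x)/x$ over $]0,N]$ gives $\langle M\rangle_t\leq Ct$, and the strong law of large numbers for martingales then forces $M(t)/t\to 0$ almost surely, so the long-time sign of $\ln(X(t))/t$ is dictated by the Ces\`aro average of $\phi(X(s))$.

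To prove $\limsup_{t\to\infty}X(t)\geq\xi$ I would argue by contradiction: if the event $\{\limsup X(t)<\xi\}$ had positive probability, on each of its realisations there would exist (random) $T,\delta>0$ with $X(t)\leq\xi-\delta$ for all $t\geq T$; strict monotonicity of $\phi$ would then give $\phi(X(s))\geq\phi(\xi-\delta)>0$ eventually, so the It\^o decomposition would force $\liminf_{t\to\infty}\ln(X(t))/t\geq\phi(\xi-\delta)>0$, hence $X(t)\to+\infty$, contradicting the a.s.\ upper bound $X(t)<N$. The second inequality is symmetric: on $\{\liminf X(t)>\xi\}$ one finds $X(t)\geq\xi+\delta$ eventually, hence $\phi(X(s))\leq\phi(\xi+\delta)<0$ eventually, yielding $\limsup\ln(X(t))/t\leq\phi(\xi+\delta)<0$ and therefore $X(t)\to 0$, which contradicts $\liminf X(t)\geq\xi+\delta>0$.

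The genuinely substantial step is the pair of contradiction arguments, and they rely crucially on the strict monotonicity of $\phi$ (not merely on the existence of a zero): this is what converts the qualitative sign information on $\phi\circ X$ into the uniform strictly positive (resp.\ negative) lower (resp.\ upper) bound needed to control the Ces\`aro average. The remaining ingredients — the existence of $\xi$, the It\^o expansion, and the asymptotic vanishing of $M(t)/t$ — are standard once the structural hypotheses on $f,g_i,h$ at $0$ and $N$ are used to bound the relevant quotients.
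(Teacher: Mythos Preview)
Your proposal is correct and follows essentially the same route as the paper's proof: the It\^o expansion of $\ln X(t)$, the strong law of large numbers for the martingale part, and the two contradiction arguments exploiting the strict monotonicity of $\phi$ (which the paper calls $\eta$) are exactly the paper's ingredients. The only cosmetic difference is that the paper fixes a single deterministic $\varepsilon$ with $\mathbb{P}(\limsup X(t)\leq\xi-2\varepsilon)>\varepsilon$ before running the pointwise argument, whereas you allow a random $\delta(\omega)$; both versions yield the same contradiction.
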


\begin{proof}
We follow the proof of Theorem 5.1 in \cite{Mao 2011}. To ease the notation we set
\begin{align*}
\eta(x):=\frac{f(x)-h(x)}{x}-\frac{1}{2}\sum_{i=1}^m\frac{g^2_i(x)}{x^2},\quad x\in [0,N].
\end{align*}
First of all, we note that $\eta(N)=-\frac{h(N)}{N}<0$; this gives, in combination with (\ref{sup 2}) and the strict monotonicity of $\eta$, the existence and uniqueness of $\xi$. Now, assume the first inequality in (\ref{thesis}) to be false. This implies the existence of $\varepsilon>0$ such that 
\begin{align}\label{A}
\mathbb{P}\left(\limsup_{t\to+\infty}X(t)\leq\xi-2\varepsilon\right)>\varepsilon.
\end{align} 
In particular, for any $\omega\in A:=\{\limsup_{t\to+\infty}X(t)\leq\xi-2\varepsilon\}$, there exists $T(\omega)$ such that
\begin{align*}
X(t,\omega)\leq \xi-\varepsilon,\quad\mbox{ for all }t\geq T(\omega),
\end{align*}
which implies
\begin{align*}
\eta(X(t,\omega))\geq \eta(\xi-\varepsilon)>0,\quad\mbox{ for all }\omega\in A\mbox{ and } t\geq T(\omega).
\end{align*}
Therefore, for $\omega\in A$ and $t> T(\omega)$ we can write
\begin{align*}
\frac{\ln(X(t))}{t}=&\frac{\ln(x_0)}{t}+\frac{1}{t}\int_0^t\eta(X(s))ds+\sum_{i=1}^m\frac{1}{t}\int_0^t\frac{g_i(X(s))}{I(s)}dB_i(s)\\
=&\frac{\ln(x_0)}{t}+\frac{1}{t}\int_0^{T(\omega)}\eta(X(s))ds+\frac{1}{t}\int_{T(\omega)}^t\eta(X(s))ds\\
&+\sum_{i=1}^m\frac{1}{t}\int_0^t\frac{g_i(X(s))}{X(s)}dB_i(s)\\
\geq&\frac{\ln(x_0)}{t}+\frac{1}{t}\int_0^{T(\omega)}\eta(X(s))ds+\frac{t-T(\omega)}{t}\eta(\xi-\varepsilon)\\
&+\sum_{i=1}^m\frac{1}{t}\int_0^t\frac{g_i(X(s))}{X(s)}dB_i(s).
\end{align*}
Hence, recalling that the strong law of large numbers for martingales gives
\begin{align*}
\lim_{t\to +\infty}\sum_{i=1}^m\frac{1}{t}\int_0^t\frac{g_i(X(s))}{X(s)}dB_i(s)=0\quad\mbox{almost surely},
\end{align*}
we conclude that
\begin{align*}
\liminf_{t\to +\infty}\frac{\ln(X(t))}{t}\geq\eta(\xi-\varepsilon)>0,\quad\mbox{on the set $A$},
\end{align*}
which implies
\begin{align*}
\lim_{t\to+\infty}X(t)=+\infty,\quad\mbox{on the set $A$}.
\end{align*}
This contradicts (\ref{A}) and hence prove the first inequality in (\ref{thesis}). \\
The second inequality in (\ref{thesis}) is proven similarly; if the thesis is not true, then  
\begin{align}\label{B}
\mathbb{P}\left(\liminf_{t\to+\infty}X(t)\geq\xi+2\varepsilon\right)>\varepsilon.
\end{align} 
for some positive $\varepsilon$. In particular, for any $\omega\in B:=\{\liminf_{t\to+\infty}X(t)\geq\xi+2\varepsilon\}$, there exists $S(\omega)$ such that
\begin{align*}
X(t,\omega)\geq \xi+\varepsilon,\quad\mbox{ for all }t\geq S(\omega),
\end{align*}
which implies
\begin{align*}
\eta(X(t,\omega))\leq \gamma(\xi+\varepsilon)<0,\quad\mbox{ for all }t\geq S(\omega).
\end{align*}
Therefore, for $\omega\in B$ and $t> S(\omega)$ we can write
\begin{align*}
\frac{\ln(X(t))}{t}=&\frac{\ln(x_0)}{t}+\frac{1}{t}\int_0^t\eta(X(s))ds+\sum_{i=1}^m\frac{1}{t}\int_0^t\frac{g_i(X(s))}{X(s)}dB_i(s)\\
=&\frac{\ln(x_0)}{t}+\frac{1}{t}\int_0^{S(\omega)}\eta(X(s))ds+\frac{1}{t}\int_{S(\omega)}^t\eta(X(s))ds\\
&+\sum_{i=1}^m\frac{1}{t}\int_0^t\frac{g_i(X(s))}{X(s)}dB_i(s)\\
\leq&\frac{\ln(x_0)}{t}+\frac{1}{t}\int_0^{S(\omega)}\eta(X(s))ds+\frac{t-S(\omega)}{t}\eta(\xi+\varepsilon)\\
&+\sum_{i=1}^m\frac{1}{t}\int_0^t\frac{g_i(X(s))}{X(s)}dB_i(s)\\
\end{align*}
Therefore,
\begin{align*}
\limsup_{t\to +\infty}\frac{\ln(X(t))}{t}\leq\eta(\xi+\varepsilon)<0,\quad\mbox{on the set $B$},
\end{align*}
which implies
\begin{align*}
\lim_{t\to+\infty}X(t)=0,\quad\mbox{on the set $B$}.
\end{align*}
This contradicts (\ref{B}) and hence proves the second inequality in (\ref{thesis}).
\end{proof}

\section{Proof of Theorem \ref{main theorem}}

We are now ready to prove our main theorem.

\subsection{Existence, uniqueness, extinction and persistence}

It is immediate to verify that the SDE
\begin{align}\label{Mao Stratonovich 3}
	d\mathtt{I}(t)=&\left[\beta \mathtt{I}(t)(N-\mathtt{I}(t))-(\mu+\gamma) \mathtt{I}(t)+\frac{\sigma^2}{2}\mathtt{I}(t)(N-\mathtt{I}(t))(N-2\mathtt{I}(t))\right]dt\nonumber\\
	&+\sigma \mathtt{I}(t)(N-\mathtt{I}(t))dB(t),
\end{align}     
with initial condition $\mathtt{I}(0)=i_0\in ]0,N[$ fulfills the assumptions of Theorem \ref{general existence theorem} if we set $m=1$,  
\begin{align*}
f(x):=\beta x(N-x)+\frac{\sigma^2}{2}x(N-x)(N-2x),\quad h(x)=(\mu+\gamma)x,\quad g(x):=\sigma x(N-x).
\end{align*}
Therefore, equation (\ref{Mao Stratonovich 3}) possesses a unique global strong solution which lives in the interval $]0,N[$ for all $t\geq 0$ with probability one.\\
Let us now observe that
\begin{align*}
\eta(x)&:=\frac{f(x)-h(x)}{x}-\frac{1}{2}\frac{g^2(x)}{x^2}\\
&=\beta (N-x)+\frac{\sigma^2}{2}(N-x)(N-2x)-(\mu+\gamma)-\frac{1}{2}\sigma^2 (N-x)^2\\
&=\left(\frac{\sigma^2}{2}x-\beta\right)(x-N)-(\mu+\gamma),
\end{align*}
and hence
\begin{align*}
\eta(0)=\beta N-(\mu+\gamma)\quad\mbox{ and }\quad \eta(N)=-(\mu+\gamma).
\end{align*}
This gives:
\begin{itemize}
	\item if $\beta N-(\mu+\gamma)<0$, that means $\frac{\beta N}{\mu+\gamma}<1$, then the assumptions of Theorem \ref{theorem extintion} are fulfilled ($\gamma$ is a convex second order polynomial which takes negative values on the boundaries of $[0,N]$); therefore,  $\mathtt{I}(t)$ will be extinct as $t$ tends to infinity;
	\item if $\beta N-(\mu+\gamma)>0$, that means $\frac{\beta N}{\mu+\gamma}>1$, then the assumptions of Theorem \ref{persistence} are fulfilled ($\gamma$ is a convex second order polynomial which takes a positive value at $0$ and a negative value at $N$); therefore, $\mathtt{I}(t)$ will be persistent as $t$ tends to infinity.
\end{itemize}

\subsection{Explicit representation of the solution}

We observe that the solution of the deterministic equation 
\begin{align}\label{a}
\frac{dI(t)}{dt}=\beta(t) I(t)(N-I(t))-(\mu+\gamma) I(t),\quad I(0)=i_0\in ]0,N[,
\end{align}
where $t\mapsto\beta(t)$ is now a continuous function of $t$, can be written as 
\begin{align}\label{b}
	I(t)=\frac{i_0e^{\int_0^tN\beta(s)ds-(\mu+\gamma)t}}{1+\int_0^t\beta(s)i_0e^{\int_0^sN\beta(r)dr-(\mu+\gamma)s}ds},\quad t\geq 0.
\end{align}
If we set $\beta(t):=\beta+\sigma\dot{B}^{\pi}(t)$, where $\dot{B}^{\pi}(t)$ stands for $\frac{d}{dt}B^{\pi}(t)$, then equation (\ref{a}) and formula (\ref{b}) become respectively
\begin{align}\label{a1}
\frac{dI^{\pi}(t)}{dt}=\beta I^{\pi}(t)(N-I^{\pi}(t))-(\mu+\gamma) I^{\pi}(t)+\sigma I^{\pi}(t)(N-I^{\pi}(t))\dot{B}^{\pi}(t),
\end{align}
with initial condition $I^{\pi}(0)=i_0\in ]0,N[$, and 
\begin{align}\label{b1}
	I^{\pi}(t)=\frac{i_0e^{\int_0^tN\left(\beta+\sigma\dot{B}^{\pi}(s)\right)ds-(\mu+\gamma)t}}{1+\int_0^t(\beta+\sigma\dot{B}^{\pi}(s))i_0e^{\int_0^sN(\beta+\sigma\dot{B}^{\pi}(r))dr-(\mu+\gamma)s}ds}.
\end{align}	
We recall that according to the Wong-Zakai theorem the stochastic process $\{I^{\pi}(t)\}_{t\geq 0}$ converges, as the mesh of the partition $\pi$ tends to zero, to the unique strong solution of the Stratonovich SDE 
\begin{align*}
d\mathtt{I}(t)=[\beta \mathtt{I}(t)(N-\mathtt{I}(t))-(\mu+\gamma) \mathtt{I}(t)]dt+\mathtt{I}(t)\sigma (N-\mathtt{I}(t))\circ dB(t),\quad \mathtt{I}(0)=i_0\in ]0,N[,
\end{align*} 
which is equivalent to the It\^o-type equation
\begin{align}\label{c}
d\mathtt{I}(t)=&\left[\beta \mathtt{I}(t)(N-\mathtt{I}(t))-(\mu+\gamma) \mathtt{I}(t)+\frac{\sigma^2}{2}\mathtt{I}(t)(N-\mathtt{I}(t))(N-2\mathtt{I}(t))\right]dt\nonumber\\
&+\sigma \mathtt{I}(t)(N-\mathtt{I}(t))dB(t),\quad \mathtt{I}(0)=i_0\in ]0,N[.
\end{align} 
We now simplify the expression in (\ref{b1}) and compute its limit as the mesh of the partition $\pi$ tends to zero: this will give us an explicit representation for the solution of (\ref{c}). To ease the notation we set
\begin{align*}
\mathcal{E}^{\pi}(t):=e^{\int_0^tN\left(\beta+\sigma\dot{B}^{\pi}(s)\right)ds-(\mu+\gamma)t}=e^{N\beta t+N \sigma B^{\pi}(t)-(\mu+\gamma)t}=e^{\delta t+N \sigma B^{\pi}(t)},
\end{align*}
where $\delta:=N\beta-(\mu+\gamma)$, and rewrite (\ref{b1}) as
\begin{align}\label{d}	
I^{\pi}(t)&=\frac{i_0\mathcal{E}^{\pi}(t)}{1+i_0\int_0^t(\beta+\sigma\dot{B}^{\pi}(s))\mathcal{E}^{\pi}(s)ds}\nonumber\\
&=\frac{i_0\mathcal{E}^{\pi}(t)}{1+i_0\beta\int_0^t\mathcal{E}^{\pi}(s)ds+ i_0\sigma\int_0^t\dot{B}^{\pi}(s)\mathcal{E}^{\pi}(s)ds}.
\end{align}	
Note that $\delta\geq 0$ if and only if $R_0=\frac{\beta N}{\mu+\gamma}\geq 1$. Now, consider the second integral in the denominator above: an integration by parts gives
\begin{align*}
\int_0^t\dot{B}^{\pi}(s)\mathcal{E}^{\pi}(s)ds&=\int_0^t\dot{B}^{\pi}(s)e^{\delta s+N \sigma B^{\pi}(s)}ds\\
&=\int_0^t\dot{B}^{\pi}(s)e^{N\sigma B^{\pi}(s)}e^{\delta s}ds\\
&=\frac{1}{N\sigma}\left(e^{N \sigma B^{\pi}(t)}e^{\delta t}-1\right)-\frac{\delta}{N\sigma}\int_0^te^{N\sigma B^{\pi}(s)}e^{\delta s}ds\\
&=\frac{1}{N\sigma}(\mathcal{E}^{\pi}(t)-1)-\frac{\delta}{N\sigma}\int_0^t\mathcal{E}^{\pi}(s)ds.
\end{align*}
Therefore, inserting the last expression in (\ref{d}) we get
\begin{align*}	
I^{\pi}(t)&=\frac{i_0\mathcal{E}^{\pi}(t)}{1+i_0\beta\int_0^t\mathcal{E}^{\pi}(s)ds+ i_0\sigma\int_0^t\dot{B}^{\pi}(s)\mathcal{E}^{\pi}(s)ds}\\
&=\frac{i_0\mathcal{E}^{\pi}(t)}{1+i_0\beta\int_0^t\mathcal{E}^{\pi}(s)ds+\frac{i_0}{N}(\mathcal{E}^{\pi}(t)-1)-\frac{i_0\delta}{N}\int_0^t\mathcal{E}^{\pi}(s)ds}\\
&=\frac{i_0\mathcal{E}^{\pi}(t)}{1+\frac{i_0}{N}(\mathcal{E}^{\pi}(t)-1)+i_0\left(\beta-\frac{\delta}{N}\right)\int_0^t\mathcal{E}^{\pi}(s)ds}\\
&=\frac{i_0\mathcal{E}^{\pi}(t)}{1+\frac{i_0}{N}(\mathcal{E}^{\pi}(t)-1)+i_0\frac{\mu+\gamma}{N}\int_0^t\mathcal{E}^{\pi}(s)ds}.
\end{align*}	
We can now let the mesh of the partition $\pi$ tend to zero and get
\begin{align*}
\mathtt{I}(t)&=\lim_{|\pi|\to 0}I^{\pi}(t)=\lim_{|\pi|\to 0}\frac{i_0\mathcal{E}^{\pi}(t)}{1+\frac{i_0}{N}(\mathcal{E}^{\pi}(t)-1)+i_0\frac{\mu+\gamma}{N}\int_0^t\mathcal{E}^{\pi}(s)ds}\\
&=\frac{i_0\mathcal{E}(t)}{1+\frac{i_0}{N}(\mathcal{E}(t)-1)+i_0\frac{\mu+\gamma}{N}\int_0^t\mathcal{E}(s)ds},
\end{align*} 
with
\begin{align*}
\mathcal{E}(t):=e^{\delta t+N \sigma B(t)}.
\end{align*}

\subsection{Recurrence}

To prove the recurrence of $\mathtt{I}(t)$ in the case $R_0\geq 1$, we need to exploit the specific structure of equation (\ref{Mao Stratonovich 3}). In particular, we will follow the approach utilized in \cite{Xu} which is based on the Feller's test for explosion (see for instance Chapter 5.5 C in \cite{KS}).\\
Let $\varphi(x):=\ln\left(\frac{x}{N-x}\right)$ and apply the It\^o formula to $\varphi(\mathtt{I}(t))$; this gives
\begin{align*}
d\varphi(\mathtt{I}(t))&=\left(\beta N-(\mu+\gamma)-(\mu+\gamma)\frac{\mathtt{I}(t)}{N-\mathtt{I}(t)}\right)dt+\sigma NdB(t)\\
&=\left(\beta N-(\mu+\gamma)-(\mu+\gamma)e^{\varphi(\mathtt{I}(t))}\right)dt+\sigma NdB(t),
\end{align*}
and, setting $\mathtt{J}(t):=\varphi(\mathtt{I}(t))$, we can write
\begin{align*}
d\mathtt{J}(t)=\left(\beta N-(\mu+\gamma)-(\mu+\gamma)e^{\mathtt{J}(t)}\right)dt+\sigma NdB(t).
\end{align*} 
Now, the scale function for this process is
\begin{align*}
\psi(x)=\int_0^x\theta(y)dy
\end{align*}
where
\begin{align*}
\theta(y)&=\exp\left\{-\frac{2}{\sigma^2N^2}\int_0^y\beta N-(\mu+\gamma)-(\mu+\gamma)e^{z}dz\right\}\\
&=\exp\left\{-\frac{2(\beta N-(\mu+\gamma))}{\sigma^2N^2}y+\frac{2(\mu+\gamma)}{\sigma^2N^2}(e^y-1)\right\}.
\end{align*}
It is clear that $\psi(+\infty)=+\infty$; moreover, for $\beta N-(\mu+\gamma)\geq 0$, that means $R_0\geq 1$, we get $\psi(-\infty)=-\infty$. These two facts together with Proposition 5.22, Chapter 5 in \cite{KS} imply that $\{\mathtt{J}(t)\}_{t\geq 0}$ is recurrent on $]-\infty, +\infty[$ and hence that $\{\mathtt{I}(t)\}_{t\geq 0}$ is recurrent on $]0,N[$.


\begin{thebibliography}{9}

\bibitem{Brauer}
F. Brauer, L.J.S. Allen, P. Van den Driessche and J. Wu, Mathematical Epidemiology, \emph{Lecture Notes in Mathematics}, No. 1945, Mathematical Biosciences Subseries,
2008.

\bibitem{Mao 2011}
A. Gray, D. Greenhalgh, L. Hu, X. Mao, J. Pan, A stochastic differential equation SIS epidemic model, \emph{SIAM J. Appl. Math.} 71 (3) (2011) 876–902.

\bibitem{HY}
H.W. Hethcote and J.A. Yorke, Gonorrhea Transmission Dynamics and Control,
\emph{Lecture Notes in Biomathematics} 56, Springer-Verlag, 1994.

\bibitem{Ikeda Watanabe}
N. Ikeda and S.Watanabe, \emph{Stochastic Differential Equations and Diffusion Processes}, North Holland, Amsterdam, New York, Oxford, Kodansha, 1981.

\bibitem{KS}
I. Karatzas and S. E. Shreve, \emph{Brownian motion and stochastic calculus}, Springer-Verlag, New York, 1991.

\bibitem{L} 
R. Lyons, Exit Boundaries of Multidimensional SDEs, \emph{Electron. Commun. Probab.}, \textbf{24}, no 24, (2019) 1–2.

\bibitem{Mao book}
X. Mao, \emph{Stochastic Differential Equations and Applications}, Second edition, Horwood, Chichester, UK, 2008.

\bibitem{Xu}
C. Xu, Global Threshold Dynamics of a Stochastic Differential Equation SIS
Model, \emph{Journal of Mathematical Analysis and Applications}, \textbf{447}, no. 2 (2017) 736-757.

\bibitem{WZ}
E. Wong and M. Zakai: On the relation between ordinary and stochastic differential equations, {\em Intern. J. Engr. Sci.} {\bf 3} (1965) 213-229.

\end{thebibliography}
\end{document}